\def\bi{\begin{align}}
\def\bin{\begin{align*}}
\newtheorem{thm}{Theorem}[section]
\newtheorem{lem}[thm]{Lemma}
\newtheorem{defn}[thm]{Definition}
\newtheorem{rem}[thm]{Remark}
\newcommand{\si}{\sigma}
\newcommand{\dif}{\mathrm{d}}
\newcommand{\ba}{\begin{array}}
\newcommand{\ea}{\end{array}}
\begin{document}

\begin{frontmatter}



\title{High order symplectic integrators based on continuous-stage Runge-Kutta-Nystr\"{o}m methods}

\author[a,b]{Wensheng Tang\corref{cor1}}
\ead{tangws@lsec.cc.ac.cn} \cortext[cor1]{Corresponding author.}
\address[a]{College of Mathematics and Statistics,\\
    Changsha University of Science and Technology,  Changsha 410114, China}
\address[b]{Hunan Provincial Key Laboratory of \\
    Mathematical Modeling and Analysis in Engineering,  Changsha 410114, China}
\author[c]{Yajuan Sun\corref{cor1}}
\ead{sunyj@lsec.cc.ac.cn}
\address[c]{LSEC, Academy of Mathematics and Systems Science,\\
Chinese Academy of Sciences, Beijing  100190, China}
\author[d]{Jingjing Zhang}
\ead{jjzhang06@outlook.com}
\address[d]{School of Science, East China Jiaotong  University, Nanchang 330013, China
}
\author[]{}

\begin{abstract}

On the basis of the previous work by Tang \& Zhang (Appl. Math.
Comput. 323, 2018, p. 204--219), in this paper we present a more
effective way to construct high-order symplectic integrators for
solving second order Hamiltonian equations. Instead of analyzing
order conditions step by step as shown in the previous work, the new
technique of this paper is using Legendre expansions to deal with
the simplifying assumptions for order conditions. With the new
technique, high-order symplectic integrators can be conveniently
devised by truncating an orthogonal series.

\end{abstract}

\begin{keyword}
Continuous-stage Runge-Kutta-Nystr\"{o}m methods; Hamiltonian
systems; Symplectic integrators;  Legendre polynomial expansion;
Simplifying assumptions.

\end{keyword}

\end{frontmatter}

\section{Introduction}
\label{}



In the last few decades, geometric integrators for the numerical
solution of various differential systems have attracted much
attention in the field of scientific and engineering computations
\cite{BlanesFernado,Channels90sio,Feng95kfc,Fengqq10sga,hairerlw06gni,
Leimkuhlerr04shd,sanzc94nhp}. Such type of integrators are related
with the terminology ``geometric" because they are applied to
systems with geometric features.  Strictly speaking, they must
preserve at least one of geometric properties of the given system.
The most significant advantage of employing such integrators is that
they can not only correctly capture the qualitative behaviors of the
exact flow of the system in phase space, but also give rise to a
more accurate long-time integration than general-purpose methods
\cite{Benetting94oth,hairerlw06gni,Shang99kam,Tang94feo}.

As is well known, traditional numerical methods such as Runge-Kutta
(RK) methods, partitioned Runge-Kutta (PRK) methods and
Runge-Kutta-Nystr\"{o}m (RKN) methods play a prominent role in the
numerical discretization of ordinary differential equations (ODEs)
\cite{butcher87tna,hairernw93sod,hairerw96sod}. Many geometric
integrators can be established from these numerical methods
\cite{lasagni88crk,sanz88rkm,sun93coh,sun95coh,suris88otc,suris89ctg},
and they are preferable for practical use due to their elegant
formulations and standardized implementations
\cite{hairerlw06gni,sanzc94nhp}. Recently, as a generalized form of
these classical methods, numerical schemes with continuous stage
(which implies infinitely-many stages) have been proposed and
developed in the literature
\cite{butcher72ato,butcher87tna,hairer10epv,miyatake14aee,miyatakeb16aco,
Tangs12tfe,Tangs12ana,Tangs14cor,Tanglx16cos,Tangz18spc}. The
construction of such ``continuous" integrators seems much easier
than traditional methods with finite stages, as the Butcher
coefficients are taken as continuous functions and they are allowed
for orthogonal expansions
\cite{brugnanoit12asf,Tangs14cor,Tanglx16cos,Tangz18spc,Tang18ano}.
It turned out that with the help of continuous-stage approach we can
construct many classical integrators of arbitrarily-high order,
without needing to solve the tedious nonlinear algebraic equations,
usually associated with the order conditions of the numerical
methods, in terms of many unknown coefficients. Moreover, some
important geometric integrators can be readily derived with this new
approach, and the prototype integrators amongst them are: symplectic
methods for Hamiltonian systems, symmetric methods for reversible
systems, and energy-preserving methods for Hamiltonian (even
Poisson) systems
\cite{brugnanoit10hbv,Celledoni09mmoqw,cohenh11lei,miyatake14aee,
miyatakeb16aco,quispelm08anc,Tangs14cor,Tanglx16cos,Tangz18spc}.

It is worth mentioning that by using continuous-stage approach one
may get some interesting results. A significant instance is that no
RK methods are energy-preserving for general non-polynomial
Hamiltonian systems \cite{Celledoni09mmoqw}, but energy-preserving
csRK methods obviously exist
\cite{brugnanoit10hbv,brugnanoi16lmf,hairer10epv,miyatake14aee,
miyatakeb16aco,quispelm08anc,Tangs12ana,Tangs12tfe,Tangs14cor}. It
is shown in \cite{Tangs12tfe,Tangsc17dgm} that some Galerkin
variational methods can be closely connected to continuous-stage
(P)RK methods. In addition, continuous-stage approaches may promote
the investigation of conjugate symplecticity of energy-preserving
methods \cite{hairer10epv,hairerz13oco,Tangs14cor}.

Recently, the present author et al. \cite{Tangz18spc} have
investigated the construction of symplectic RKN-type integrators on
the basis of continuous-stage methods. It is shown in
\cite{Tangz18spc} that by analyzing order conditions one by one with
Legendre expansions, symplectic integrators of any order can be
constructed step by step. However, when the same approaches are used
for deriving higher-order integrators, one has to perform much more
complicated analyses and tedious calculations, as more and more
order conditions have to be considered. To address this difficulty,
in this paper we propose a more effective way to construct
high-order integrators by considering the simplifying assumptions
for order conditions. Actually, a similar technique has been
previously developed in \cite{Tangs14cor} for constructing RK-type
methods.

The paper is organized as follows: In Section 2, we introduce the
definition of RKN-type methods for solving second-order differential
equations;  In Section 3, the order theory will be discussed; In
Section 4, we expound our approach for constructing high-order
symplectic integrators; Finally, a few conclusions are drawn in
Section 5.

\section{Runge-Kutta-Nystr\"{o}m-type methods}

Consider the following initial value problem in the form of
second-order system,
\begin{align}\label{eq:second}
q''=f(t, q),\;\;q(t_0)=q_0,\;\;q'(t_0)=q'_0,
\end{align}
where $f:\mathbb{R}\times\mathbb{R}^{d}\rightarrow\mathbb{R}^{d}$ is
a sufficiently smooth vector-valued function. The most popular
numerical methods for solving \eqref{eq:second} are
Runge-Kutta-Nystr\"{o}m methods, which can be defined as follows.
\begin{defn}\cite{hairerlw06gni}\label{RKN:def}
A Runge-Kutta-Nystr\"{o}m (RKN) method for solving \eqref{eq:second}
is defined by
\begin{subequations}
\begin{alignat}{4}
\label{eq:grkn1}
Q_i&=q_0 +hc_i q'_0 +h^2\sum\limits_{j=1}^s \bar{a}_{i j}f(t_0+c_j h, Q_j),\;i=1,\cdots, s, \\
\label{eq:grkn2}
q_{1}&=q_0+ h q'_0+h^2\sum\limits_{i=1}^{s}\bar{b}_if(t_0+c_i h, Q_i), \\
\label{eq:grkn3} q'_{1}& = q'_0 +h\sum\limits_{i=1}^{s} b_i
f(t_0+c_i h, Q_i),
\end{alignat}
\end{subequations}
which can be characterized by the following Butcher tableau
\[\ba{c|c}c & \bar{A}\\[4pt]
\hline & \bar{b} \\ \hline\\[-15pt] & b \ea\]
where $\bar{A}=(\bar{a}_{i j})_{s\times
s},\;\bar{b}=(\bar{b}_1,\cdots,\bar{b}_s),\;
b=(b_1,\cdots,b_s),\;c=(c_1,\cdots,c_s)$.
\end{defn}

In a similar manner, one defines continuous-stage
Runge-Kutta-Nystr\"{o}m methods.

\begin{defn}\cite{Tangz18spc}\label{csRKN:def}
Let $\bar{A}_{\tau, \si}$ be a function of variables $\tau,
\si\in[0,1]$ and $\bar{B}_\tau,\;B_\tau,\;C_\tau$ be functions of
$\tau\in[0,1]$. A continuous-stage Runge-Kutta-Nystr\"{o}m (csRKN)
method for solving \eqref{eq:second} is given by
\begin{subequations}
    \begin{alignat}{2}
    \label{eq:csrkn1}
&Q_\tau=q_0 +hC_\tau q'_0 +h^2\int_{0}^{1} \bar{A}_{\tau, \si} f(t_0+C_\si h, Q_\si) \dif \si, \;\;\tau \in[0, 1], \\
    \label{eq:csrkn2}
&q_{1}=q_0+ h q'_0+h^2 \int_{0}^{1} \bar{B}_\tau  f(t_0+C_\tau h, Q_\tau) \dif\tau, \\
    \label{eq:csrkn3}
&q'_1 = q'_0 +h\int_{0}^{1} B_\tau f(t_0+C_\tau h, Q_\tau) \dif\tau,
    \end{alignat}
\end{subequations}
which can be characterized by the following Butcher tableau
\[\ba{c|c} C_\tau & \bar{A}_{\tau,\si}\\[4pt]
\hline & \bar{B}_\tau \\ \hline\\[-15pt] & B_\tau \ea\]
\end{defn}

We shall refer to the methods in Definition \ref{RKN:def} and
\ref{csRKN:def} as ``RKN-type methods".

\section{Order theory for RKN-type methods}

\begin{defn}\cite{hairernw93sod}
A RKN-type method has order $p$, if for any sufficiently regular
problem \eqref{eq:second}, as $h\rightarrow0$, its \emph{local
error} satisfies
\begin{equation*}
q_1-q(t_0+h)=\mathcal{O}(h^{p+1}),\quad
q'_1-q'(t_0+h)=\mathcal{O}(h^{p+1}).
\end{equation*}
\end{defn}

A ``modern" order theory for RKN methods can be found in
\cite{hairernw93sod,hairerlw06gni,sanzc94nhp} and references
therein. However, in this section we do not plan to review all
aspects of the order theory, but the elegant parts in terms of
simplifying assumptions for order conditions will be picked up and
then extended for csRKN methods.

\subsection{Order theory for RKN methods}

In order to reduce the difficulty of analyzing the order accuracy,
the following simplifying assumptions for order conditions were
proposed \cite{hairernw93sod,hairerlw06gni}
\begin{equation*}\label{RKN-simpl-assump}
\begin{split}
&B(\xi):\; \sum_{i=1}^sb_ic_i^{\kappa-1}=\frac{1}{\kappa},\;1\leq\kappa\leq\xi,\\
&CN(\eta):\;\sum_{j=1}^s\bar{a}_{ij}c_j^{\kappa-1}=\frac{c_i^{\kappa+1}}{\kappa(\kappa+1)},
\;1\leq i\leq s,\,1\leq\kappa\leq\eta-1,\\
&DN(\zeta):\;\sum_{i=1}^sb_ic_i^{\kappa-1}\bar{a}_{ij}=
\frac{b_jc_j^{\kappa+1}}{\kappa(\kappa+1)}-\frac{b_jc_j}{\kappa}+\frac{b_j}{\kappa+1},\;1\leq
j\leq s,\,1\leq\kappa\leq\zeta-1.
\end{split}
\end{equation*}
\begin{thm}\cite{hairernw93sod}\label{ord_RKN}
If the coefficients of the RKN method
\eqref{eq:grkn1}-\eqref{eq:grkn3} satisfy the simplifying
assumptions $B(p),\,CN(\eta),\,DN(\zeta)$, and if
$\bar{b}_i=b_i(1-c_i)$, for all $i=1,\ldots, s$, then the method has
order at least $\min\{p,\,2\eta+2,\eta+\zeta\}$.
\end{thm}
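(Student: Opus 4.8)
The plan is to work within the B-series framework for second-order problems, where order is characterized combinatorially through Nyström trees. First I would set up the rooted-tree apparatus adapted to \eqref{eq:second}: because the right-hand side $f$ depends only on $(t,q)$ and the equation is of second order, the relevant objects are the special Nyström trees, each carrying an order $\rho(\tau)$ and a density $\gamma(\tau)$ (the $t$-dependence being absorbed into the abscissae $c_i$ through the purely bushy trees governed by $B(\xi)$). Taylor-expanding both the exact flow and the one-step map \eqref{eq:grkn1}--\eqref{eq:grkn3}, the method is of order $p$ precisely when the elementary weights of the $q_1$-update and of the $q'_1$-update agree with $1/\gamma(\tau)$ for every tree with $\rho(\tau)\le p$. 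The theorem is thereby reduced to verifying these two families of algebraic identities up to the claimed order.

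The second step is to decouple the two families using the hypothesis $\bar b_i=b_i(1-c_i)$, which is exactly the relation making the position update inherit its accuracy from the velocity update. Substituting $\bar b_i=b_i-b_ic_i$ into each $q_1$-order condition and invoking the quadrature relations $B(\cdot)$ produces an integration-by-parts-type cancellation that expresses every position identity through velocity identities of comparable order; the elementary check $\sum_i\bar b_i=\sum_i b_i-\sum_i b_ic_i=1-\tfrac12=\tfrac12$ under $B(2)$ is the prototype. Consequently it suffices to establish the velocity order conditions up to order $\min\{p,2\eta+2,\eta+\zeta\}$, and from here I would work only with the weights $\Phi(\tau)=\sum_i b_i(\cdots)$ attached to the $b$-update.

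The core of the argument is then a Butcher-type reduction of the velocity conditions driven by the simplifying assumptions. The assumption $CN(\eta)$ states that the internal stages reproduce the exact double integral of polynomials of degree up to $\eta-2$, since its right-hand side $c_i^{\kappa+1}/(\kappa(\kappa+1))$ is precisely $\int_0^{c_i}\!\int_0^{s}t^{\kappa-1}\,dt\,ds$; this lets me collapse any tree carrying a tall, thin subtree hanging from an interior vertex, provided the height of that subtree does not exceed $\eta$, which is where the ceiling $2\eta+2$ originates. Dually, $DN(\zeta)$ is the $b$-weighted analogue and lets me absorb branches adjacent to the root, and pairing a $DN(\zeta)$ reduction at the root with $CN(\eta)$ reductions above it accounts for the $\eta+\zeta$ ceiling. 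I would then run an induction on the number of vertices: every Nyström tree of order at most $\min\{p,2\eta+2,\eta+\zeta\}$ is reduced by repeated application of $CN(\eta)$ and $DN(\zeta)$ either to a tree already treated or, in the base case, to the bushy configurations whose weights are governed directly by $B(p)$ and hence equal $1/\gamma(\tau)$.

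I expect the main obstacle to be the combinatorial bookkeeping of this reduction: identifying, for each tree, which subtree to collapse, checking that the density $\gamma(\tau)$ transforms consistently under every reduction, and---most delicately---verifying that the thresholds $2\eta+2$ and $\eta+\zeta$ are sharp, so that trees just above these orders genuinely resist reduction and cannot be controlled by the simplifying assumptions alone. Handling the three-term right-hand side of $DN(\zeta)$ inside the induction, and matching it term-by-term against the contributions of the root branch, is the step most likely to require care.
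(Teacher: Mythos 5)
Your outline matches the classical argument: the paper does not prove this theorem itself but cites it from Hairer--N\o rsett--Wanner, and the proof there proceeds exactly as you describe --- via the special Nystr\"{o}m (SN-) trees, the reduction of the $q_1$-conditions to the $q'_1$-conditions through $\bar b_i=b_i(1-c_i)$, and the Butcher-type collapsing of trees using $CN(\eta)$ and $DN(\zeta)$ down to the bushy trees controlled by $B(p)$. One minor remark: you need not verify sharpness of the thresholds $2\eta+2$ and $\eta+\zeta$, since the statement only asserts that the order is at least $\min\{p,\,2\eta+2,\,\eta+\zeta\}$.
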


\subsection{Order theory for csRKN methods}

Similarly to the classical case, we propose the following
simplifying assumptions:
\begin{equation*}\label{csRKN-simpl-assump}
\begin{split}
&\mathcal{B}(\xi):\quad \int_0^1B_\tau C_\tau^{\kappa-1}\,\dif
\tau=\frac{1}{\kappa},\;\; 1\leq\kappa\leq\xi,\\
&\mathcal{CN}(\eta):\quad
\int_0^1\bar{A}_{\tau,\,\sigma}C_\sigma^{\kappa-1}\,\dif
\sigma=\frac{C_\tau^{\kappa+1}}{\kappa(\kappa+1)},\;\;1\leq\kappa\leq\eta-1,\\
&\mathcal{DN}(\zeta):\quad \int_0^1B_\tau C_\tau^{\kappa-1}
\bar{A}_{\tau,\,\sigma}\,\dif \tau=\frac{B_\sigma
C_\sigma^{\kappa+1}}{\kappa(\kappa+1)}-\frac{B_\sigma
C_\sigma}{\kappa} +\frac{B_\sigma}{\kappa+1},\;\;
1\leq\kappa\leq\zeta-1,
\end{split}
\end{equation*}
where $ \tau,\,\si\in[0,1]$.
\begin{thm}\label{ord_csRKN}
If the coefficients of the csRKN method
\eqref{eq:csrkn1}-\eqref{eq:csrkn3} satisfy the simplifying
assumptions
$\mathcal{B}(p),\,\mathcal{CN}(\eta),\,\mathcal{DN}(\zeta)$, and if
$\bar{B}_\tau=B_\tau(1-C_\tau)$, for $\tau\in[0,1]$, then the method
has order at least $\min\{p,\,2\eta+2,\eta+\zeta\}$.
\end{thm}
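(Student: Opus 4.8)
The plan is to transcribe the classical order proof of Theorem~\ref{ord_RKN} into the continuous-stage setting, the only structural change being that every stage sum $\sum_i$ is replaced by an integral $\int_0^1\,\dif\tau$ and the coefficient matrix $\bar a_{ij}$ by the kernel $\bar A_{\tau,\sigma}$. First I would set up the Nystr\"om B-series (the Taylor expansion indexed by special Nystr\"om trees) for both the exact flow of \eqref{eq:second} and the numerical map $(q_0,q'_0)\mapsto(q_1,q'_1)$ defined by \eqref{eq:csrkn1}--\eqref{eq:csrkn3}. Each tree $t$ carries an elementary weight $\Phi(t)$ and a density $\gamma(t)$, where for the csRKN method $\Phi(t)$ is a nested integral built from $B_\tau,\ \bar B_\tau,\ \bar A_{\tau,\sigma}$ and $C_\tau$; the order-$p$ requirement is $\Phi(t)=1/\gamma(t)$ for every tree with $\rho(t)\le p$. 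Since $B_\tau$ and $\bar A_{\tau,\sigma}$ are assumed integrable, Fubini's theorem legitimises each interchange of the order of integration, so these continuous elementary weights obey exactly the same recursions as the finite-stage ones.

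With this dictionary fixed, the three integral simplifying assumptions play the same reductive role as their discrete counterparts. I would argue by induction on the order of subtrees. The assumption $\mathcal{CN}(\eta)$ allows one to collapse any factor $\int_0^1\bar A_{\tau,\sigma}C_\sigma^{\kappa-1}\,\dif\sigma$ with $\kappa\le\eta-1$ into $C_\tau^{\kappa+1}/(\kappa(\kappa+1))$, i.e.\ to prune short branches attached through the internal kernel; dually, $\mathcal{DN}(\zeta)$ prunes short branches attached through the weight $B_\tau$; and $\mathcal{B}(p)$ supplies the root quadrature identity up to order~$p$. The coupling relation $\bar B_\tau=B_\tau(1-C_\tau)$ ties the position weight $\bar B_\tau$ to the velocity weight $B_\tau$, so that the two error expansions (for $q_1$ and for $q'_1$) are reduced consistently; it is the continuous analogue of $\bar b_i=b_i(1-c_i)$ in Theorem~\ref{ord_RKN}.

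Assembling these reductions reproduces the three competing bounds of the classical theorem. The quadrature assumption $\mathcal{B}(p)$ caps the order at $p$; the internal assumption $\mathcal{CN}(\eta)$, applied as far as it reaches, lifts the attainable order to $2\eta+2$; and combining $\mathcal{CN}(\eta)$ with $\mathcal{DN}(\zeta)$ disposes of the remaining trees up to order $\eta+\zeta$. Since every Nystr\"om tree of order at most $\min\{p,\,2\eta+2,\,\eta+\zeta\}$ falls into at least one of these reducible classes, all the corresponding order conditions are satisfied and the method attains the stated order.

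The hard part will not be the algebra but the bookkeeping that guarantees the substitution ``sum $\to$ integral'' really preserves every identity used in the reduction. The delicate point is applying $\mathcal{CN}(\eta)$ or $\mathcal{DN}(\zeta)$ \emph{underneath} an outer integration: the simplifying identity holds pointwise in $\tau$ (respectively $\sigma$), and one must check that it may be inserted inside a further integral without disturbing the remaining factors. This is precisely where integrability of the kernels and Fubini's theorem are needed; once that interchange is justified, the entire classical tree-reduction argument transfers essentially verbatim.
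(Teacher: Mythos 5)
Your proposal follows exactly the route the paper takes: the paper's own proof is a one-line remark that the result is obtained by carrying the classical argument for Theorem~\ref{ord_RKN} over to the continuous-stage setting via the SN-tree (special Nystr\"om tree) machinery of Hairer--N\o rsett--Wanner, replacing stage sums by integrals over $[0,1]$. Your sketch is in fact more detailed than the paper's proof, and the points you flag (Fubini for nested elementary weights, the role of $\bar B_\tau=B_\tau(1-C_\tau)$ in coupling the two expansions) are precisely the bookkeeping the paper leaves implicit.
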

\begin{proof}
This result can be proved similarly to the classical result
\cite{hairernw93sod}.
\end{proof}

We firstly introduce the $\iota$-degree normalized shifted Legendre
polynomial $P_\iota(t)$ by using the Rodrigues' formula
$$P_0(t)=1,\;P_\iota(t)=\frac{\sqrt{2\iota+1}}{\iota!}\frac{{\dif}^\iota}{\dif t^\iota}
\Big(t^\iota(t-1)^\iota\Big),\; \;\iota=1,2,3,\cdots.$$ A well-known
property of Legendre polynomials is that they are orthogonal to each
other with respect to the $L^2([0, 1])$ inner product
$$\int_0^1 P_\iota(t) P_\kappa(t)\,\dif t= \delta_{\iota\kappa},\quad\iota,\,
\kappa=0,1,2,\cdots,$$ and they satisfy the following integration
formulas
\begin{equation}\label{property}
\begin{split} &\int_0^xP_0(t)\,\dif
t=\xi_1P_1(x)+\frac{1}{2}P_0(x), \\
&\int_0^xP_\iota(t)\,\dif
t=\xi_{\iota+1}P_{\iota+1}(x)-\xi_{\iota}P_{\iota-1}(x),\quad
\iota=1,2,3,\cdots, \\
&\int_x^1P_\iota(t)\,\dif
t=\delta_{\iota0}-\int_0^{x}P_\iota(t)\,\dif t,\quad
\iota=0,1,2,\cdots,
\end{split}
\end{equation}
where $\xi_\iota=\frac{1}{2\sqrt{4\iota^2-1}}$ and
$\delta_{\iota\kappa}$ is the Kronecker delta. Hereafter we assume
$B_\tau=1,C_\tau=\tau$ \cite{Tangs14cor,Tanglx16cos}. Consequently,
the first assumption $\mathcal{B}(\xi)$ can be reduced to
\begin{equation*}
\int_0^1 \tau^{\kappa-1}\,\dif \tau=\frac{1}{\kappa},\quad
\kappa=1,\ldots,\xi,
\end{equation*}
which is obviously satisfied for any positive integer $\xi$. For
convenience, we denote this fact by $\mathcal{B}(\infty)$. In
addition, by taking the derivative with respect to $\tau$ and $\si$
respectively, it follows from $\mathcal{CN}(\eta)$ and
$\mathcal{DN}(\zeta)$
\begin{equation}\label{simpl-assump01}
\begin{split}
&\mathcal{CN}'(\eta):\quad
\int_0^1\frac{\dif}{\dif\tau}\bar{A}_{\tau,\,\sigma}\sigma^{\kappa-1}\,\dif
\sigma=\frac{\tau^{\kappa}}{\kappa}=\int_0^{\tau}\si^{\kappa-1}\,\dif \si,\quad1\leq\kappa\leq\eta-1,\\
&\mathcal{DN}'(\zeta):\quad \int_0^1 \tau^{\kappa-1}
\frac{\dif}{\dif\si}\bar{A}_{\tau,\,\sigma}\,\dif \tau=\frac{
\sigma^{\kappa}}{\kappa}-\frac{1}{\kappa}=-\int_\sigma^1\tau^{\kappa-1}\,\dif
\tau,\quad 1\leq\kappa\leq\zeta-1.
\end{split}
\end{equation}
Note that \eqref{simpl-assump01} do not imply $\mathcal{CN}(\eta)$
and $\mathcal{DN}(\zeta)$, hence we additionally assume
\begin{equation}\label{simpl-assump02}
\int_0^1\bar{A}_{0,\,\sigma}\sigma^{\kappa-1}\,\dif
\sigma=0,\quad1\leq\kappa\leq\eta-1,
\end{equation}
and
\begin{equation*}
\int_0^1\tau^{\kappa-1} \bar{A}_{\tau,\,0}\,\dif
\tau=\frac{1}{\kappa+1}=\int_0^1\tau^{\kappa}\,\dif \tau,\quad
1\leq\kappa\leq\zeta-1,
\end{equation*}
which in turn gives rise to
\begin{equation}\label{simpl-assump03}
\int_0^1\tau^{\kappa-1} (\bar{A}_{\tau,\,0}-\tau)\,\dif \tau=0,\quad
1\leq\kappa\leq\zeta-1.
\end{equation}

Since the Legendre polynomial sequence $\{P_\iota(t)\}$ forms a
complete orthogonal set in $L^2([0, 1])$, it allows us to consider
the following expansions (with $\tau$ and $\si$ being fixed
respectively)
\begin{equation}\label{expansion1}
\frac{\dif}{\dif\tau}\bar{A}_{\tau,\,\sigma}=\sum_{\iota\geq0}
\gamma_\iota(\tau) P_\iota(\sigma),\;\;\;
\frac{\dif}{\dif\si}\bar{A}_{\tau,\,\sigma}=\sum_{\iota\geq0}
\lambda_\iota(\sigma) P_\iota(\tau),
\end{equation}
where $\gamma_\iota(\tau),\,\lambda_\iota(\sigma)$ are unknown
coefficient functions. Observe that \eqref{simpl-assump01} imply
\begin{equation}\label{simpl-assump04}
\begin{split}
&\mathcal{CN}'(\eta):\quad
\int_0^1\frac{\dif}{\dif\tau}\bar{A}_{\tau,\,\sigma}P_{\kappa-1}(\si)\,\dif
\sigma=\int_0^{\tau}P_{\kappa-1}(\si)\,\dif \si,\quad1\leq\kappa\leq\eta-1,\\
&\mathcal{DN}'(\zeta):\quad \int_0^1 P_{\kappa-1}(\tau)
\frac{\dif}{\dif\si}\bar{A}_{\tau,\,\sigma}\,\dif
\tau=-\int_\sigma^1P_{\kappa-1}(\tau)\,\dif \tau,\quad
1\leq\kappa\leq\zeta-1,
\end{split}
\end{equation}
which leads to
\begin{equation}\label{expan_coe1}
\begin{split}
&\gamma_\iota(\tau)=\int_0^{\tau}P_\iota(\si)\,\dif \si,\quad0\leq\iota\leq\eta-2,\\
&\lambda_\iota(\sigma)=-\int_\sigma^1P_\iota(\tau)\,\dif \tau,\quad
0\leq\iota\leq\zeta-2.
\end{split}
\end{equation}
Substituting \eqref{expan_coe1} into \eqref{expansion1} and using
\eqref{property} gives
\begin{equation}\label{expan1}
\begin{split}
\frac{\dif}{\dif\tau}\bar{A}_{\tau,\,\sigma}
&=\sum_{\iota=0}^{\eta-2}\int_0^{\tau}P_\iota(x)\,\dif
x\,P_\iota(\sigma)+\sum_{\iota\geq\eta-1} \gamma_\iota(\tau)
P_\iota(\sigma)\\
&=\frac{1}{2}+\sum_{\iota=0}^{\eta-2}\xi_{\iota+1}
P_{\iota+1}(\tau)P_\iota(\sigma)-\sum_{\iota=0}^{\eta-3}\xi_{\iota+1}
P_{\iota+1}(\sigma)P_\iota(\tau)+\sum_{\iota\geq\eta-1}
\gamma_\iota(\tau) P_\iota(\sigma),
\end{split}
\end{equation}
\begin{equation}\label{expan2}
\begin{split}
\frac{\dif}{\dif\si}\bar{A}_{\tau,\,\sigma}
&=-\sum_{\iota=0}^{\zeta-2}\int_{\sigma}^1P_\iota(x)\,\dif
x\, P_\iota(\tau)+\sum_{\iota\geq\zeta-1}\lambda_\iota(\sigma)P_\iota(\tau)\\
&=-\frac{1}{2}-\sum_{\iota=0}^{\zeta-3}\xi_{\iota+1}
P_{\iota+1}(\tau)P_\iota(\sigma)+\sum_{\iota=0}^{\zeta-2}\xi_{\iota+1}
P_{\iota+1}(\sigma)P_\iota(\tau)+\sum_{\iota\geq\zeta-1}\lambda_\iota(\sigma)P_\iota(\tau).
\end{split}
\end{equation}
Integrating \eqref{expan1} with respect to $\tau$ and \eqref{expan2}
with respect to $\si$, yields
\begin{equation}\label{integ}
\begin{split}
\bar{A}_{\tau,\,\sigma}-\bar{A}_{0,\,\sigma}=&\frac{1}{2}\tau+\sum_{\iota=0}^{\eta-2}\xi_{\iota+1}
\int_0^{\tau}P_{\iota+1}(x)\,\dif
x\,P_\iota(\sigma)\\
&-\sum_{\iota=0}^{\eta-3}\xi_{\iota+1}
P_{\iota+1}(\sigma)\int_0^{\tau}P_\iota(x)\,\dif
x+\sum_{\iota\geq\eta-1} \int_0^{\tau}\gamma_\iota(x)\,\dif x
\,P_\iota(\sigma),\\
\bar{A}_{\tau,\,\si}-\bar{A}_{\tau,\,0}=&-\frac{1}{2}\sigma-\sum_{\iota=0}^{\zeta-3}\xi_{\iota+1}
P_{\iota+1}(\tau)\int_0^{\si}P_\iota(x)\,\dif
x\\
&+\sum_{\iota=0}^{\zeta-2}\xi_{\iota+1}\int_0^{\si}
P_{\iota+1}(x)\,\dif
x\,P_\iota(\tau)+\sum_{\iota\geq\zeta-1}\int_0^{\si}\lambda_\iota(x)\,\dif
x\,P_\iota(\tau).
\end{split}
\end{equation}
Taking into account \eqref{simpl-assump02} and
\eqref{simpl-assump03}, implies
\begin{equation}\label{simpl-assump05}
\begin{split}
&\int_0^1\bar{A}_{0,\,\sigma}P_{\kappa-1}(\sigma)\,\dif
\sigma=0,\quad1\leq\kappa\leq\eta-1,\\
&\int_0^1P_{\kappa-1}(\tau) (\bar{A}_{\tau,\,0}-\tau)\,\dif
\tau=0,\quad 1\leq\kappa\leq\zeta-1.
\end{split}
\end{equation}
We then consider the following orthogonal expansions
\begin{equation}\label{expansion2}
\bar{A}_{0,\,\sigma}=\sum_{\iota\geq0} \alpha_\iota
P_\iota(\sigma),\;\;\; \bar{A}_{\tau,\,0}-\tau=\sum_{\iota\geq0}
\beta_\iota P_\iota(\tau),
\end{equation}
where $\alpha_\iota,\,\beta_\iota$ are real numbers. By inserting
\eqref{expansion2} into \eqref{simpl-assump05} we get
\begin{equation}\label{expan_coe2}
\alpha_\iota=0,\;\; 0\leq\iota\leq\eta-2;\;\;\; \beta_\iota=0,\;\;
0\leq\iota\leq\zeta-2.
\end{equation}
Then, it follows
\begin{equation}\label{expan3}
\bar{A}_{0,\,\sigma}=\sum_{\iota\geq\eta-1} \alpha_\iota
P_\iota(\sigma),\;\;\;
\bar{A}_{\tau,\,0}=\tau+\sum_{\iota\geq\zeta-1} \beta_\iota
P_\iota(\tau).
\end{equation}
Using the known equality $\tau=\frac{1}{2}P_0(\tau)+\xi_1P_1(\tau)$
and inserting \eqref{expan3} into \eqref{integ}, gives
\begin{equation*}
\begin{split}
\bar{A}_{\tau,\,\sigma}=&\frac{1}{4}P_0(\tau)+\frac{1}{2}\xi_1P_1(\tau)+
\sum_{\iota=0}^{\eta-2}\xi_{\iota+1}
\int_0^{\tau}P_{\iota+1}(x)\,\dif
x\,P_\iota(\sigma)\\
&-\sum_{\iota=0}^{\eta-3}\xi_{\iota+1}
P_{\iota+1}(\sigma)\int_0^{\tau}P_\iota(x)\,\dif
x+\sum_{\iota\geq\eta-1}\big(\alpha_\iota+\int_0^{\tau}\gamma_\iota(x)\,\dif
x\big)\,P_\iota(\sigma),
\end{split}
\end{equation*}
\begin{equation*}
\begin{split}
\bar{A}_{\tau,\,\si}=&\frac{1}{4}P_0(\tau)+\xi_1P_1(\tau)-\frac{1}{2}\xi_1P_1(\si)
-\sum_{\iota=0}^{\zeta-3}\xi_{\iota+1}P_{\iota+1}(\tau)\int_0^{\si}P_\iota(x)\,\dif
x\\
&+\sum_{\iota=0}^{\zeta-2}\xi_{\iota+1}\int_0^{\si}
P_{\iota+1}(x)\,\dif
x\,P_\iota(\tau)+\sum_{\iota\geq\zeta-1}\big(\beta_\iota+\int_0^{\si}\lambda_\iota(x)\,\dif
x\big)\,P_\iota(\tau).
\end{split}
\end{equation*}
By exploiting \eqref{property} once again, it ends up with
\begin{equation*}
\begin{split}
\bar{A}_{\tau,\,\sigma}=&\frac{1}{6}-\frac{1}{2}\xi_1P_1(\si)+\frac{1}{2}\xi_1P_1(\tau)+
\sum_{\iota=1}^{\eta-3}\xi_{\iota}\xi_{\iota+1}
P_{\iota-1}(\tau)P_{\iota+1}(\si)\\
&-\sum_{\iota=1}^{\eta-2}\big(\xi_{\iota}^2+\xi_{\iota+1}^2\big)
P_{\iota}(\tau)P_{\iota}(\sigma)+\sum_{\iota=1}^{\eta-1}\xi_{\iota}\xi_{\iota+1}
P_{\iota+1}(\tau)P_{\iota-1}(\sigma)\\
&+\sum_{\iota\geq\eta-1}\phi_\iota(\tau)\,P_\iota(\sigma),
\end{split}
\end{equation*}
\begin{equation*}
\begin{split}
\bar{A}_{\tau,\,\si}=&\frac{1}{6}-\frac{1}{2}\xi_1P_1(\si)+\frac{1}{2}\xi_1P_1(\tau)
+\sum_{\iota=1}^{\zeta-1}\xi_{\iota}\xi_{\iota+1}
P_{\iota-1}(\tau)P_{\iota+1}(\si)\\
&-\sum_{\iota=1}^{\zeta-2}\big(\xi_{\iota}^2+\xi_{\iota+1}^2\big)
P_{\iota}(\tau)P_{\iota}(\sigma)+\sum_{\iota=1}^{\zeta-3}\xi_{\iota}\xi_{\iota+1}
P_{\iota+1}(\tau)P_{\iota-1}(\sigma)\\
&+\sum_{\iota\geq\zeta-1}\psi_\iota(\si)\,P_\iota(\tau),
\end{split}
\end{equation*}
where
\begin{equation*}
\begin{split}
&\phi_\iota(\tau)=\alpha_\iota+\int_0^{\tau}\gamma_\iota(x)\,\dif
x,\quad\iota\geq\eta-1,\\
&\psi_\iota(\si)=\beta_\iota+\int_0^{\si}\lambda_\iota(x)\,\dif
x\quad\iota\geq\zeta-1.
\end{split}
\end{equation*}
We summarize the results above in the following lemma.
\begin{lem}\label{lemma:csRKN}
For the csRKN method \eqref{eq:csrkn1}-\eqref{eq:csrkn3} denoted by
$(\bar{A}_{\tau,\si},\bar{B}_\tau,B_\tau,C_\tau)$ with the
assumption $B_\tau=1, C_\tau=\tau$, we have the following
statements:
\begin{itemize}
\item[(I)] \begin{equation}\label{expan4}
\begin{split}
\mathcal{CN}(\eta)\;\Longleftrightarrow\;\bar{A}_{\tau,\,\sigma}&=\frac{1}{6}-\frac{1}{2}\xi_1P_1(\si)+\frac{1}{2}\xi_1P_1(\tau)
+\sum_{\iota=1}^{\eta-3}\xi_{\iota}\xi_{\iota+1}
P_{\iota-1}(\tau)P_{\iota+1}(\si)\\
&-\sum_{\iota=1}^{\eta-2}\big(\xi_{\iota}^2+\xi_{\iota+1}^2\big)
P_{\iota}(\tau)P_{\iota}(\sigma)+\sum_{\iota=1}^{\eta-1}\xi_{\iota}\xi_{\iota+1}
P_{\iota+1}(\tau)P_{\iota-1}(\sigma)\\
&+\sum_{\iota\geq\eta-1}\phi_\iota(\tau)\,P_\iota(\sigma),
\end{split}
\end{equation} where $\xi_\iota=\frac{1}{2\sqrt{4\iota^2-1}}\,(\iota\geq1)$ and
$\phi_\iota(\tau)\,(\iota\geq\eta-1)$ are arbitrary $L^2$-integrable
functions;
\item[(II)] \begin{equation}\label{expan5}
\begin{split}
\mathcal{DN}(\zeta)\;\Longleftrightarrow\;\bar{A}_{\tau,\,\si}&=\frac{1}{6}-\frac{1}{2}\xi_1P_1(\si)+\frac{1}{2}\xi_1P_1(\tau)
+\sum_{\iota=1}^{\zeta-1}\xi_{\iota}\xi_{\iota+1}
P_{\iota-1}(\tau)P_{\iota+1}(\si)\\
&-\sum_{\iota=1}^{\zeta-2}\big(\xi_{\iota}^2+\xi_{\iota+1}^2\big)
P_{\iota}(\tau)P_{\iota}(\sigma)+\sum_{\iota=1}^{\zeta-3}\xi_{\iota}\xi_{\iota+1}
P_{\iota+1}(\tau)P_{\iota-1}(\sigma)\\
&+\sum_{\iota\geq\zeta-1}\psi_\iota(\sigma)\,P_\iota(\tau),
\end{split}
\end{equation} where $\xi_\iota$ is defined as above, and
$\psi_\iota(\sigma)\,(\iota\geq\zeta-1)$ are arbitrary
$L^2$-integrable functions.
\end{itemize}
\end{lem}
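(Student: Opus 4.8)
The plan is to prove each equivalence by reformulating the simplifying assumption (under the standing hypothesis $B_\tau=1,\,C_\tau=\tau$) as the conjunction of a \emph{differential} condition on $\bar{A}_{\tau,\sigma}$ and a \emph{boundary} condition, and then translating both into constraints on the shifted Legendre expansion coefficients. The underlying observation is that differentiating an integral identity in one variable only loses an additive function of the other variable, which is recovered exactly by the data at the endpoint; hence the differentiated condition together with the endpoint condition is equivalent to the original assumption, not merely necessary.

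For part (I), I would first reduce $\mathcal{CN}(\eta)$ to $\int_0^1\bar{A}_{\tau,\sigma}\sigma^{\kappa-1}\,\dif\sigma=\tfrac{\tau^{\kappa+1}}{\kappa(\kappa+1)}$. Differentiating in $\tau$ gives the necessary condition $\mathcal{CN}'(\eta)$ of \eqref{simpl-assump01}, while evaluating at $\tau=0$ gives the boundary condition \eqref{simpl-assump02}; together these are equivalent to $\mathcal{CN}(\eta)$. Expanding $\frac{\dif}{\dif\tau}\bar{A}_{\tau,\sigma}$ in the Legendre basis as in \eqref{expansion1} and testing against $P_{\kappa-1}(\sigma)$ as in \eqref{simpl-assump04}, orthogonality pins down the first $\eta-1$ coefficients $\gamma_\iota(\tau)=\int_0^\tau P_\iota(\si)\,\dif\si$ for $0\le\iota\le\eta-2$, recorded in \eqref{expan_coe1}, leaving the tail arbitrary. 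In parallel, expanding $\bar{A}_{0,\sigma}$ as in \eqref{expansion2} and imposing \eqref{simpl-assump05} forces $\alpha_\iota=0$ for $0\le\iota\le\eta-2$, which is \eqref{expan_coe2}.

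With both coefficient families determined, I would reconstruct $\bar{A}_{\tau,\sigma}$ by integrating the expansion \eqref{expan1} in $\tau$ from $0$ and adding back $\bar{A}_{0,\sigma}$ from \eqref{expan3}. The bulk of the work is then the Legendre bookkeeping: repeatedly applying the integration formulae \eqref{property} to collapse the nested integrals $\int_0^\tau P_{\iota+1}(x)\,\dif x$ and $\int_0^\tau P_\iota(x)\,\dif x$, together with the identity $\tau=\tfrac{1}{2}P_0(\tau)+\xi_1P_1(\tau)$, into the three product families $P_{\iota-1}(\tau)P_{\iota+1}(\si)$, $P_\iota(\tau)P_\iota(\sigma)$ and $P_{\iota+1}(\tau)P_{\iota-1}(\sigma)$ that appear in \eqref{expan4}. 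The arbitrary tail $\sum_{\iota\ge\eta-1}\overline{\gamma}_\iota(\tau)P_\iota(\sigma)$ then absorbs both the undetermined $\gamma_\iota$ and the undetermined $\alpha_\iota$, since $\overline{\gamma}_\iota(\tau)=\alpha_\iota+\int_0^\tau\gamma_\iota(x)\,\dif x$ sweeps out all admissible functions. For the converse, I would simply verify that any $\bar{A}_{\tau,\sigma}$ of the form \eqref{expan4} satisfies $\mathcal{CN}(\eta)$ by computing $\int_0^1\bar{A}_{\tau,\sigma}P_{\kappa-1}(\sigma)\,\dif\sigma$ via orthogonality, which selects precisely the finite-sum terms and returns $\tfrac{1}{\kappa(\kappa+1)}\tau^{\kappa+1}$ for $1\le\kappa\le\eta-1$.

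Part (II) is handled symmetrically, interchanging the roles of $\tau$ and $\sigma$: I would differentiate $\mathcal{DN}(\zeta)$ in $\sigma$ to obtain $\mathcal{DN}'(\zeta)$, evaluate at $\sigma=0$ to get the boundary condition \eqref{simpl-assump03}, expand $\frac{\dif}{\dif\sigma}\bar{A}_{\tau,\sigma}$ and $\bar{A}_{\tau,0}-\tau$ in the Legendre basis using \eqref{expan2} and \eqref{expan3}, and integrate in $\sigma$ to recover \eqref{expan5}. The only real obstacle is the careful tracking of the index ranges in the three finite double sums when \eqref{property} is applied, in particular the mismatch between the upper limits $\eta-3$, $\eta-2$ and $\eta-1$ (and their $\zeta$-analogues), which arises because $\int_0^\tau P_{\iota+1}$ and $\int_0^\tau P_\iota$ shift the polynomial degree in opposite directions; no conceptual difficulty remains once the equivalence with the differentiated-plus-boundary formulation is in place.
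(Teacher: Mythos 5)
Your proposal is correct and follows essentially the same route as the paper: the paper likewise splits each simplifying assumption into the differentiated conditions \eqref{simpl-assump01} plus the endpoint conditions \eqref{simpl-assump02}--\eqref{simpl-assump03}, pins down the Legendre coefficients \eqref{expan_coe1} and \eqref{expan_coe2} by orthogonality, and reconstructs $\bar{A}_{\tau,\sigma}$ by integrating and applying \eqref{property}. Your explicit remark that the differentiated-plus-boundary formulation is \emph{equivalent} to (not merely necessary for) the original assumption, and your separate verification of the converse direction, make the ``equivalence'' claim of the lemma slightly more transparent than the paper's presentation, but the substance is identical.
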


\begin{thm}\label{thm:csRKN}
For the csRKN method \eqref{eq:csrkn1}-\eqref{eq:csrkn3} denoted by
$(\bar{A}_{\tau,\si},\bar{B}_\tau,B_\tau,C_\tau)$ with the
assumption $B_\tau=1, C_\tau=\tau$,  the following two statements
are equivalent to each other:
\begin{itemize}
\item[(I)] Both $\mathcal{CN}(\eta)$ and $\mathcal{DN}(\zeta)$ hold;
\item[(II)] The coefficient $\bar{A}_{\tau,\,\sigma}$
admits the following expansion:
\begin{equation}\label{expan6}
\begin{split}
\bar{A}_{\tau,\,\si}&=\frac{1}{6}-\frac{1}{2}\xi_1P_1(\si)+\frac{1}{2}\xi_1P_1(\tau)
+\sum_{\iota=1}^{N_1}\xi_{\iota}\xi_{\iota+1}
P_{\iota-1}(\tau)P_{\iota+1}(\si)\\
&-\sum_{\iota=1}^{N_2}\big(\xi_{\iota}^2+\xi_{\iota+1}^2\big)
P_{\iota}(\tau)P_{\iota}(\sigma)+\sum_{\iota=1}^{N_3}\xi_{\iota}\xi_{\iota+1}
P_{\iota+1}(\tau)P_{\iota-1}(\sigma)\\
&+\sum_{i\geq\zeta-1 \atop
j\geq\eta-1}\omega_{(i,\,j)}P_i(\tau)P_j(\si),
\end{split}
\end{equation}
where $N_1=\max\{\eta-3,\,\zeta-1\},\,N_2=\max\{\eta-2,\,\zeta-2\},
\,N_3=\max\{\eta-1,\,\zeta-3\},\,\xi_\iota=\frac{1}{2\sqrt{4\iota^2-1}}$
and $\omega_{(i,\,j)}$ are arbitrary real numbers.
\end{itemize}
\end{thm}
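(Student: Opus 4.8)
The plan is to read the theorem off Lemma \ref{lemma:csRKN}. Statement (I) requires $\mathcal{CN}(\eta)$ and $\mathcal{DN}(\zeta)$ to hold at the same time, so by the two parts of the lemma it is equivalent to demanding that $\bar{A}_{\tau,\sigma}$ simultaneously admit the representation \eqref{expan4} and the representation \eqref{expan5}. Thus the whole theorem reduces to the purely algebraic claim that a function $\bar{A}_{\tau,\sigma}$ can be written in both forms \eqref{expan4} and \eqref{expan5} if and only if it has the form \eqref{expan6}. First I would use that $\{P_i(\tau)P_j(\sigma)\}_{i,j\ge0}$ is a complete orthogonal system on $[0,1]^2$, so that $\bar{A}_{\tau,\sigma}=\sum_{i,j\ge0}a_{ij}P_i(\tau)P_j(\sigma)$ with uniquely determined real coefficients $a_{ij}$; every comparison below is then a comparison of these coefficients.

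The key structural remark I would record is that the explicitly written parts of \eqref{expan4} and of \eqref{expan5} are truncations of one common ``pentadiagonal'' double series whose only nonzero coefficients are $a_{00}=\tfrac16$, $a_{01}=-\tfrac12\xi_1$, $a_{10}=\tfrac12\xi_1$, the upper off-diagonal $a_{\iota-1,\iota+1}=\xi_\iota\xi_{\iota+1}$, the main diagonal $a_{\iota\iota}=-(\xi_\iota^2+\xi_{\iota+1}^2)$, and the lower off-diagonal $a_{\iota+1,\iota-1}=\xi_\iota\xi_{\iota+1}$. Inspecting the three summation ranges $\iota\le\eta-3,\eta-2,\eta-1$ in \eqref{expan4} shows that it keeps exactly those terms of $\sigma$-degree $\le\eta-2$ and leaves the block of $\sigma$-degree $\ge\eta-1$ free through the arbitrary functions $\overline{\gamma}_\iota(\tau)$; likewise the ranges $\iota\le\zeta-1,\zeta-2,\zeta-3$ in \eqref{expan5} show it keeps exactly those terms of $\tau$-degree $\le\zeta-2$ and leaves the block of $\tau$-degree $\ge\zeta-1$ free through the arbitrary functions $\overline{\lambda}_\iota(\sigma)$.

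With this picture the forward implication becomes transparent. A coefficient $a_{ij}$ is pinned down by \eqref{expan4} exactly when $j\le\eta-2$ and by \eqref{expan5} exactly when $i\le\zeta-2$; since both must hold, $a_{ij}$ is forced on the union $\{j\le\eta-2\}\cup\{i\le\zeta-2\}$ and remains free precisely on the intersection $\{i\ge\zeta-1\}\cap\{j\ge\eta-1\}$, which is exactly the index set of the free tail $\sum\omega_{(i,j)}P_i(\tau)P_j(\sigma)$ in \eqref{expan6}. On the overlap $\{j\le\eta-2\}\cap\{i\le\zeta-2\}$ both forms prescribe the same value, since each equals the restriction of the common pentadiagonal series, so no contradiction arises; taking the union of the two families of forced terms merely raises each of the three diagonal summation limits to the larger of the two values, giving precisely $N_1=\max\{\eta-3,\zeta-1\}$, $N_2=\max\{\eta-2,\zeta-2\}$, $N_3=\max\{\eta-1,\zeta-3\}$. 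The converse is then immediate: starting from \eqref{expan6}, any pentadiagonal term whose index exceeds the limit of \eqref{expan4} (respectively \eqref{expan5}) automatically lies in the block of $\sigma$-degree $\ge\eta-1$ (respectively $\tau$-degree $\ge\zeta-1$), so together with the $\omega_{(i,j)}$ it can be absorbed into the arbitrary functions $\overline{\gamma}_\iota(\tau)$ (respectively $\overline{\lambda}_\iota(\sigma)$), which exhibits \eqref{expan6} in each of the two forms.

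I expect the only delicate point to be the index bookkeeping of the third paragraph: one must check carefully that the three shifted diagonals, under the two different truncations, do not intersect the free tail and that their forced entries on the common region genuinely coincide, so that the union of all forced terms reassembles exactly into the truncated pentadiagonal part of \eqref{expan6} with the stated limits $N_1,N_2,N_3$ and the disjoint free block $\{i\ge\zeta-1,\,j\ge\eta-1\}$.
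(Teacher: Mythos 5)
Your proof is correct and takes essentially the same route as the paper: both reduce the statement to Lemma \ref{lemma:csRKN}, expand everything in the tensor basis $\{P_i(\tau)P_j(\sigma)\}_{i,j\geq 0}$, and compare coefficients to see that the forced block is $\{j\leq\eta-2\}\cup\{i\leq\zeta-2\}$ and the free block is $\{i\geq\zeta-1,\,j\geq\eta-1\}$. The paper's proof simply states the coefficient comparison and omits the index bookkeeping, which you carry out explicitly (and correctly, including the limits $N_1,N_2,N_3$ as maxima over the two truncations).
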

\begin{proof}
This theorem can be proved by using Lemma \ref{lemma:csRKN}. Let us
consider the expansions of $\phi_\iota(\tau)$ and
$\psi_\iota(\sigma)$
\begin{equation*}
\begin{split}
&\phi_\iota(\tau)=\sum_{i\geq0} \mu^\iota_i
P_i(\tau),\;\iota\geq\eta-1,\\
&\psi_\iota(\sigma)=\sum_{j\geq0} \nu^\iota_j
P_j(\sigma),\;\iota\geq\zeta-1,
\end{split}
\end{equation*}
where $\mu^\iota_i,\,\nu^\iota_j$ are real numbers. Considering that
$$P_i(\tau)P_j(\si),\;\;i,j=0,1,2,\cdots$$
form a complete orthogonal set in $L^2([0,1]\times[0,1])$, and
substituting the two expansions above into \eqref{expan4} and
\eqref{expan5} respectively, we then get the final result by
comparing similar terms.
\end{proof}

Since $\mathcal{B}(\infty)$ holds true for a csRKN method, Theorem
\ref{ord_csRKN} and \ref{thm:csRKN} allow to readily derive its
order as
$\min\{\infty,\,2\eta+2,\eta+\zeta\}=\min\{2\eta+2,\eta+\zeta\}$.

\begin{rem}\label{rem:csRKN_trunc}
For the sake of obtaining a practical csRKN method, we have to
define a finite form for $\bar{A}_{\tau,\,\si}$. A natural and
simple way is to truncate the series \eqref{expan6}. As a
consequence, the Butcher coefficient $\bar{A}_{\tau,\,\si}$ becomes
a bivariate polynomial in terms of $\tau$ and $\sigma$.
\end{rem}

\subsection{RKN methods by using quadrature formulas}

As for the practical implementation of the csRKN method
\eqref{eq:csrkn1}-\eqref{eq:csrkn3}, one has to approximate the
integrals by quadratures. Using a quadrature formula denoted by
$(b_i, c_i)_{i=1}^s$ yields an $s$-stage RKN method
\begin{subequations}
    \begin{alignat}{2}
    \label{eq:rkn1}
&Q_i=q_0 +hC_i q'_0+h^2\sum\limits_{j=1}^{s}b_j\bar{A}_{ij}f(t_0+C_jh,\,Q_j), \quad i=1,\cdots, s, \\
    \label{eq:rkn2}
&q_{1}=q_0+ h q'_0+h^2\sum\limits_{i=1}^{s}b_i\bar{B}_if(t_0+C_ih,\,Q_i), \\
    \label{eq:rkn3}
&q'_{1}= q'_0+h\sum\limits_{i=1}^{s}b_iB_if(t_0+C_ih,\,Q_i),
    \end{alignat}
\end{subequations}
where $\bar{A}_{ij}=\bar{A}_{c_i, c_j}, \bar{B}_i=\bar{B}_{c_i},
B_i=B_{c_i}, C_i=C_{c_i}$, which can be characterized by the
following Butcher tableau
\begin{equation}\label{RKN:qua_orig}
\ba{c|ccc} C_1 & b_1\bar{A}_{11}
& \cdots & b_s\bar{A}_{1s}\\[2pt]
\vdots &\vdots &\vdots\\[2pt]
C_s & b_1\bar{A}_{s1} &
\cdots & b_s\bar{A}_{ss}\\[2pt]
\hline & b_1\bar{B}_{1}  & \cdots & b_s\bar{B}_{s}\\[2pt]
\hline & b_1B_{1}  & \cdots & b_sB_{s}\ea
\end{equation}

Particularly, if we assume
$\bar{B}_\tau=B_\tau(1-C_\tau),\,B_\tau=1,\, C_\tau=\tau$ for
$\tau\in[0,1]$, then it gives an $s$-stage classical RKN method with
tableau
\begin{equation}\label{RKN:qua}
\ba{c|ccc} c_1 & b_1\bar{A}_{11}
& \cdots & b_s\bar{A}_{1s}\\[2pt]
\vdots &\vdots &\vdots\\[2pt]
c_s & b_1\bar{A}_{s1} &
\cdots & b_s\bar{A}_{ss}\\[2pt]
\hline & \bar{b}_1  & \cdots & \bar{b}_s\\[2pt]
\hline & b_1  & \cdots & b_s\ea
\end{equation}
where $\bar{b}_i=b_i(1-c_i),\; i=1,\cdots,s$. For the sake of
analyzing the order of the RKN method \eqref{RKN:qua}, linked with
Remark \ref{rem:csRKN_trunc}, we have the following result.

\begin{thm}\label{qua:csRKN}
Assume $\bar{A}_{\tau,\,\sigma}$ is a bivariate polynomial of degree
$\pi^{\tau}$ in $\tau$ and degree $\pi^{\sigma}$ in $\sigma$, and
the quadrature formula $(b_i,c_i)_{i=1}^s$ is of order $p$. If a
csRKN method \eqref{eq:csrkn1}-\eqref{eq:csrkn3} denoted by
$(\bar{A}_{\tau,\si},\bar{B}_\tau,B_\tau,C_\tau)$ with
$\bar{B}_\tau=B_\tau(1-C_\tau),\,B_\tau=1,\,
C_\tau=\tau,\,\tau\in[0,1]$ and both $\mathcal{CN}(\eta)$,
$\mathcal{DN}(\zeta)$ hold, then the RKN method \eqref{RKN:qua} is
at least of order
$$\min\{p, \,2\alpha+2, \,\alpha+\beta\},$$
where $\alpha=\min\{\eta,\,p-\pi^{\sigma}+1\}$ and
$\beta=\min\{\zeta,\, p-\pi^{\tau}+1\}$.
\end{thm}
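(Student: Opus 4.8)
The plan is to read off the Butcher coefficients of the RKN scheme \eqref{RKN:qua}, verify that it satisfies the \emph{discrete} simplifying assumptions \eqref{RKN-simpl-assump} up to suitable indices, and then invoke Theorem~\ref{ord_RKN}. From \eqref{RKN:qua} the nodes are $c_i$ and the weights $b_i$ (both inherited from the quadrature), the internal coefficients are $\bar{a}_{ij}=b_j\bar{A}_{c_i,c_j}$, and the weight vector is $\bar{b}_i=b_i(1-c_i)$. In particular the relation $\bar{b}_i=b_i(1-c_i)$ required by Theorem~\ref{ord_RKN} already holds by construction, so it remains only to pin down the ranges of $\kappa$ for which the discrete $B$, $CN$ and $DN$ relations hold.

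First I would establish $B(p)$. Since the quadrature is of order $p$ it integrates every polynomial of degree $\le p-1$ exactly; applying it to $t^{\kappa-1}$ gives $\sum_i b_ic_i^{\kappa-1}=\int_0^1 t^{\kappa-1}\,\dif t=\tfrac1\kappa$ for $1\le\kappa\le p$, which is exactly $B(p)$.

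The heart of the argument is the discrete $CN$ and $DN$ relations, where the degree of $\bar{A}$ enters. For $CN$, the sum $\sum_j \bar{a}_{ij}c_j^{\kappa-1}=\sum_j b_j\big(\bar{A}_{c_i,c_j}\,c_j^{\kappa-1}\big)$ is precisely the quadrature applied, in the $\sigma$ variable, to $\bar{A}_{c_i,\sigma}\,\sigma^{\kappa-1}$, a polynomial of degree $d^{\sigma}+\kappa-1$ in $\sigma$. The quadrature reproduces its integral as soon as $d^{\sigma}+\kappa-1\le p-1$, i.e.\ $\kappa\le p-d^{\sigma}$; and whenever in addition $\kappa\le\eta-1$, the continuous assumption $\mathcal{CN}(\eta)$ evaluates that integral to $c_i^{\kappa+1}/(\kappa(\kappa+1))$. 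Intersecting the two ranges shows the discrete $CN$ relation holds for $1\le\kappa\le\min(\eta-1,\,p-d^{\sigma})=\alpha-1$, that is, $CN(\alpha)$ with $\alpha=\min(\eta,\,p-d^{\sigma}+1)$. Symmetrically, $\sum_i b_ic_i^{\kappa-1}\bar{a}_{ij}=b_j\sum_i b_i\big(c_i^{\kappa-1}\bar{A}_{c_i,c_j}\big)$ is $b_j$ times the quadrature, in the $\tau$ variable, of $\tau^{\kappa-1}\bar{A}_{\tau,c_j}$, of degree $d^{\tau}+\kappa-1$ in $\tau$; exactness holds for $\kappa\le p-d^{\tau}$, and for $\kappa\le\zeta-1$ the continuous $\mathcal{DN}(\zeta)$ turns the integral into $c_j^{\kappa+1}/(\kappa(\kappa+1))-c_j/\kappa+1/(\kappa+1)$. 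Multiplying back by $b_j$ reproduces exactly the right-hand side of the discrete $DN$ relation in \eqref{RKN-simpl-assump}, the common factor $b_j$ making the identity hold trivially at any node with $b_j=0$ as well; hence $DN(\beta)$ holds with $\beta=\min(\zeta,\,p-d^{\tau}+1)$.

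With $B(p)$, $CN(\alpha)$, $DN(\beta)$ and $\bar{b}_i=b_i(1-c_i)$ in hand, Theorem~\ref{ord_RKN} immediately yields the claimed order $\min(p,\,2\alpha+2,\,\alpha+\beta)$. The only delicate point is the degree bookkeeping above: one must correctly count the degrees of the integrands $\sigma^{\kappa-1}\bar{A}_{\tau,\sigma}$ and $\tau^{\kappa-1}\bar{A}_{\tau,\sigma}$ to determine the exact $\kappa$-window of quadrature exactness, and then intersect it with the window where $\mathcal{CN}(\eta)$ and $\mathcal{DN}(\zeta)$ are available. Everything else is substitution followed by an appeal to Theorem~\ref{ord_RKN}.
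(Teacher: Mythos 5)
Your proof is correct and follows essentially the same route as the paper: apply the order-$p$ quadrature to the continuous simplifying assumptions, track the polynomial degrees of the integrands to determine the windows $\kappa\le\alpha-1$ and $\kappa\le\beta-1$ where exactness and $\mathcal{CN}(\eta)$, $\mathcal{DN}(\zeta)$ overlap, note the extra factor $b_j$ in the discrete $DN$ relation, and invoke Theorem~\ref{ord_RKN}. Your degree bookkeeping is in fact spelled out more explicitly than in the paper, which simply states the resulting ranges.
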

\begin{proof}
Since $\int_0^1 g(x)\, \dif x=\sum_{i=1}^s b_i g(c_i)$ holds for any
polynomial $g(x)$ of degree up to $p-1$, by using the quadrature
formula $(b_i,c_i)_{i=1}^s$ to compute the integrals of
$\mathcal{B}(\xi)$, $\mathcal{CN}(\eta)$, $\mathcal{DN}(\zeta)$ one
obtains:
\begin{equation*}
\begin{split}
&\sum_{i=1}^sb_ic_i^{\kappa-1}=\frac{1}{\kappa},\;\kappa=1,\cdots,p,\\
&\sum_{j=1}^s(b_{j}\bar{A}_{ij})c_j^{\kappa-1}=\frac{c_i^{\kappa+1}}{\kappa(\kappa+1)},
\;i=1,\cdots,s,\;\kappa=1,\cdots,\alpha-1,\\
&\sum_{i=1}^sb_ic_i^{\kappa-1}(b_{j}\bar{A}_{ij})=
\frac{b_jc_j^{\kappa+1}}{\kappa(\kappa+1)}-\frac{b_jc_j}{\kappa}+\frac{b_j}{\kappa+1},
\;j=1,\cdots,s,\;\kappa=1,\cdots,\beta-1.
\end{split}
\end{equation*}
where $\alpha=\min\{\eta,\,p-\pi^{\sigma}+1\}$ and
$\beta=\min\{\zeta,\, p-\pi^{\tau}+1\}$.  These formulas imply that
the RKN method \eqref{RKN:qua} satisfies $B(p)$, ${CN}(\alpha)$ and
${DN}(\beta)$, and it is observed that $\bar{b}_i=b_i(1-c_i)$ is
naturally satisfied for each $i=1,\ldots, s$. Consequently, the
statement follows from Theorem \ref{ord_RKN}.
\end{proof}

\section{Construction of high-order symplectic integrators}

Hamiltonian systems constitute a very important subclass of
dynamical systems in the field of classical and non-classical
mechanics \cite{Arnold89mmo,Channels90sio,Feng84ods,hairerlw06gni,
sanzc94nhp,Feng95kfc,Fengqq10sga}. Such type of systems can be
written as
\begin{equation}\label{Hs}
z'=J^{-1}\nabla_{z}H(z),\;\;z(t_0)=z_0\in\mathbb{R}^{2d},\;\;
z=\begin{pmatrix}
  p \\
  q \\
\end{pmatrix},\;\;
J=\begin{pmatrix}
0 & I \\
-I & 0 \\
\end{pmatrix}.
\end{equation}
Symplectic integrators are of great interest for solving such
systems
\cite{Benetting94oth,Feng84ods,Feng95kfc,Fengqq10sga,Leimkuhlerr04shd,
sanzc94nhp,hairerlw06gni}, as they usually reproduce excellent
qualitative behaviors of the exact flow
\cite{Channels90sio,Shang99kam} and exhibit bounded energy errors
for exponentially-long time \cite{hairerlw06gni}.

In what follows, we restrict our attention to a special type of
Hamiltonian systems with the Hamiltonian function
$$H(z)=\frac{1}{2}p^TMp+V(q),$$ where $M$ is a constant symmetric
matrix, and $V(q)$ is a scalar function. Such systems are usually
called separable Hamiltonian systems, which reads
\begin{equation}\label{eq:first}
\begin{cases}
p'=-\nabla_q V(q),\\[2pt]
q'=Mp.
\end{cases}
\end{equation}
Substituting the second equation into the first equation gives
\begin{equation}\label{eq:Hs}
q''=-M\nabla_q V(q).
\end{equation}

Denote $f(q)=-M\nabla_q V(q)$ and $g(q)=-\nabla_q V(q)$, for solving
the equation \eqref{eq:Hs}, we propose the following csRKN method
\begin{subequations}\label{csRKN:Hs}
\begin{alignat}{2}
    \label{Heq:csrkn1}
&Q_\tau=q_0 +hC_\tau Mp_0 +h^2\int_{0}^{1} \bar{A}_{\tau, \si} f(Q_\si) \dif \si, \;\;\tau \in[0, 1], \\
    \label{Heq:csrkn2}
&q_{1}=q_0+ h Mp_0+h^2 \int_{0}^{1} \bar{B}_\tau  f(Q_\tau) \dif\tau, \\
    \label{Heq:csrkn3}
&p_1 = p_0 +h\int_{0}^{1} B_\tau g(Q_\tau) \dif \tau,
\end{alignat}
\end{subequations}
which is derived by replacing the variable $q'$ with $Mp$ in
Definition \ref{csRKN:def} but with $M$ eliminated in the last
formula. In \cite{Tangz18spc}, the authors have proved the following
results.

\begin{thm}\cite{Tangz18spc}\label{constr_symcsRKN}
The csRKN method denoted by
$(\bar{A}_{\tau,\si},\bar{B}_\tau,B_\tau,C_\tau)$ with $B_\tau=1,
C_\tau=\tau$ is symplectic, if $\bar{A}_{\tau,\si}$ and
$\bar{B}_\tau$ possess the following forms in terms of Legendre
polynomials
\begin{equation}\label{sym_cond}
\begin{split}
\bar{B}_\tau&=1-\tau=\frac{1}{2}P_0(\tau)-\xi_1P_1(\tau),\quad\tau\in[0,1],\\
\bar{A}_{\tau,\si}&=\alpha_{(0,0)}+\alpha_{(0,1)}P_1(\si)+\alpha_{(1,0)}P_1(\tau)+\sum\limits_{i+j>1}\alpha_{(i,j)}
P_i(\tau)P_j(\sigma),\quad\tau,\si\in[0,1],
\end{split}
\end{equation}
where $\alpha_{(0,0)}$ is an arbitrary real number,
$\alpha_{(0,1)}-\alpha_{(1,0)}=-\xi_1=-\frac{\sqrt{3}}{6}$, and
$\alpha_{(i,j)}=\alpha_{(j,i)},\;\forall\,i+j>1$.
\end{thm}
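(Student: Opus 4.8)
The plan is to verify directly the two sufficient conditions for symplecticity furnished by Theorem \ref{symp_cond_ori}, specialized to the hypothesis $B_\tau=1,\,C_\tau=\tau$. Under this hypothesis the first condition \eqref{sym_cond_orig01} reads $\bar{B}_\tau=1-\tau$, which is exactly the prescribed $\bar{B}_\tau$: the Legendre expression $\frac{1}{2}P_0(\tau)-\xi_1P_1(\tau)$ is merely a rewriting of $1-\tau$, since $P_0(\tau)=1$, $P_1(\tau)=\sqrt{3}(2\tau-1)$ and $\xi_1=\frac{1}{2\sqrt{3}}$. Thus the first condition holds automatically, and the entire burden falls on the second condition \eqref{sym_cond_orig02}.

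With $B_\tau=B_\si=1$, condition \eqref{sym_cond_orig02} collapses to $\bar{B}_\si-\bar{A}_{\tau,\si}=\bar{B}_\tau-\bar{A}_{\si,\tau}$, i.e.
\begin{equation*}
\bar{A}_{\si,\tau}-\bar{A}_{\tau,\si}=\bar{B}_\tau-\bar{B}_\si=(1-\tau)-(1-\si)=\si-\tau.
\end{equation*}
So the task reduces to computing the antisymmetric part of $\bar{A}_{\tau,\si}$ from its Legendre expansion \eqref{sym_cond} and matching it to $\si-\tau$.

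Next I would substitute the prescribed form of $\bar{A}_{\tau,\si}$ and form the difference $\bar{A}_{\si,\tau}-\bar{A}_{\tau,\si}$. The constant term $\alpha_{(0,0)}$ cancels outright, so the single free parameter plays no role in the symmetry condition. For the double sum $\sum_{i+j>1}\alpha_{(i,j)}P_i(\tau)P_j(\si)$, the hypothesis $\alpha_{(i,j)}=\alpha_{(j,i)}$ makes the sum invariant under the interchange $\tau\leftrightarrow\si$: relabeling the summation indices and invoking the symmetry shows $\sum_{i+j>1}\alpha_{(i,j)}P_i(\si)P_j(\tau)=\sum_{i+j>1}\alpha_{(i,j)}P_i(\tau)P_j(\si)$, so all higher-order contributions drop out of the difference. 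The only surviving piece comes from the two linear terms, giving
\begin{equation*}
\bar{A}_{\si,\tau}-\bar{A}_{\tau,\si}=(\alpha_{(0,1)}-\alpha_{(1,0)})\bigl(P_1(\tau)-P_1(\si)\bigr).
\end{equation*}

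Finally I would insert $P_1(\tau)-P_1(\si)=\sqrt{3}\bigl((2\tau-1)-(2\si-1)\bigr)=2\sqrt{3}\,(\tau-\si)$ together with the prescribed value $\alpha_{(0,1)}-\alpha_{(1,0)}=-\xi_1=-\frac{1}{2\sqrt{3}}$ to obtain $\bar{A}_{\si,\tau}-\bar{A}_{\tau,\si}=-(\tau-\si)=\si-\tau$, which is precisely what the reduced condition demands. By Theorem \ref{symp_cond_ori} the method is then symplectic. The argument is essentially a bookkeeping verification; the only step requiring genuine care is the cancellation of the double sum, where one must correctly apply the symmetry $\alpha_{(i,j)}=\alpha_{(j,i)}$ after an index relabeling, and then match the two remaining $P_1$ coefficients using the explicit form of $P_1$ and the value of $\xi_1$.
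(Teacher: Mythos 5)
Your proposal is correct and follows essentially the same route as the paper: specialize Theorem \ref{symp_cond_ori} to $B_\tau=1$, $C_\tau=\tau$, reduce the second symplectic condition to $\bar{A}_{\sigma,\tau}-\bar{A}_{\tau,\sigma}=\sigma-\tau=\xi_1(P_1(\sigma)-P_1(\tau))$, and match Legendre coefficients so that only the $P_1$ terms survive. The paper merely runs the same computation in the ``derive the conditions'' direction rather than the ``verify the conditions'' direction, which is immaterial.
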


\begin{thm}\cite{Tangz18spc}\label{sym_quad}
If the csRKN method denoted by
$(\bar{A}_{\tau,\si},\bar{B}_\tau,B_\tau,C_\tau)$ satisfies the
symplecticity conditions
\begin{subequations}
\begin{alignat}{2}
\label{sym_cond_orig01}
\bar{B}_\tau&=B_\tau(1-C_\tau),\quad\tau\in[0,1],\\
\label{sym_cond_orig02}
B_\tau(\bar{B}_\si-\bar{A}_{\tau,\si})&=B_\si(\bar{B}_\tau
-\bar{A}_{\si,\tau}),\quad\tau,\si\in[0,1],
\end{alignat}
\end{subequations}
then the associated RKN method \eqref{RKN:qua_orig} derived by using
a quadrature formula $(b_i, c_i)_{i=1}^s$ is symplectic.
\end{thm}

Instead of constructing symplectic integrators step by step with
each step getting one higher order as in \cite{Tangz18spc}, now we
can directly use Theorem \ref{thm:csRKN}. One just needs to compare
the two series in terms of Legendre polynomials as shown in
\eqref{expan6} and \eqref{sym_cond}, and then devise symplectic
csRKN integrators by a suitable truncation of the series. Theorem
\ref{sym_quad} implies that one can use quadrature formulas to get
standard symplectic RKN methods from symplectic csRKN methods, and
the order of the resulting methods can be analyzed by Theorem
\ref{qua:csRKN}. It turns out that all the symplectic integrators
presented in \cite{Tangz18spc} can be recovered by using this new
technique. Moreover, new symplectic integrators can be established
in a more convenient way, for instance, if we take
$\bar{B}_\tau=1-\tau,\;B_\tau=1,\;C_\tau=\tau$ and
\begin{equation}\label{eq:6coeff}
\begin{split}
\bar{A}_{\tau,\,\si}&=\frac{1}{6}-\frac{1}{2}\xi_1P_1(\si)+\frac{1}{2}\xi_1P_1(\tau)
+\sum_{\iota=1}^{2}\xi_{\iota}\xi_{\iota+1}
P_{\iota-1}(\tau)P_{\iota+1}(\si)\\
&-\big(\xi_{1}^2+\xi_{2}^2\big)
P_{1}(\tau)P_{1}(\sigma)+\sum_{\iota=1}^{2}\xi_{\iota}\xi_{\iota+1}
P_{\iota+1}(\tau)P_{\iota-1}(\sigma)+\theta P_2(\tau)P_2(\si),
\end{split}
\end{equation}
where $\xi_\iota=\frac{1}{2\sqrt{4\iota^2-1}}$ and $\theta$ is an
arbitrary real parameter, then, the use of the $3$-point Gaussian
quadrature, produces a family of $3$-stage $6$-order symplectic RKN
methods, having Butcher tableau
\[\ba{c|ccc} \frac{5-\sqrt{15}}{10} & \frac{2+30\theta}{135} &
\frac{19-6\sqrt{15}-120\theta}{270}&
\frac{62-15\sqrt{15}+120\theta}{540}\\[2pt]
\frac{1}{2} & \frac{19+6\sqrt{15}-120\theta}{432}&
\frac{1+15\theta}{27}&
\frac{19-6\sqrt{15}-120\theta}{432}\\[2pt]
\frac{5+\sqrt{15}}{10}& \frac{62+15\sqrt{15}+120\theta}{540}
&\frac{19+6\sqrt{15}-120\theta}{270}&\frac{2+30\theta}{135}\\[2pt]
\hline & \frac{5+\sqrt{15}}{36} & \frac{2}{9} & \frac{5-\sqrt{15}}{36}\\[2pt]
\hline & \frac{5}{18} & \frac{4}{9} & \frac{5}{18} \ea\]

\section{Concluding remarks}

In this paper, we present a new technique to construct high-order
symplectic integrators via analyzing the simplifying assumptions for
order conditions by means of orthogonal expansions. The new
technique shows that high-order integrators can be devised by
truncating an orthogonal series. Besides, one could introduce some
free parameters in the formulation of Butcher coefficients --- an
underlying application for this is that one may get explicit or
semi-explicit integrators with suitable choices of these parameters
(see \cite{Tangz18spc}).

\section*{Acknowledgements}

The first author was supported by the National Natural Science
Foundation of China (11401055), China Scholarship Council
(No.201708430066) and Scientific Research Fund of Hunan Provincial
Education Department (15C0028). The second author was supported by
the Foundation of the NNSFC (No.11271357), the Foundation for
Innovative Research Groups of the NNSFC (No.11321061) and ITER-China
Program (No.2014GB124005). The third author was supported by the
foundation of NNSFC (No. 11201125, 11761033) and PhD scientific
research foundation of East China Jiaotong University.

\end{document}